\def\R{\mathbb{R}}
\def\N{\mathbb{N}}
\def\epsilon{\varepsilon}
\def\tilde{\widetilde}
\def\trait (#1) (#2) (#3){\vrule width #1pt height #2pt depth #3pt}
\def\fin{\hfill\trait (0.1) (5) (0) \trait (5) (0.1) (0) \kern-5pt 
\trait (5) (5) (-4.9) \trait (0.1) (5) (0)}
\newcommand{\be}{\begin{equation}}
\newcommand{\ee}{\end{equation}}
\newcommand{\baa}{\begin{array}}
\newcommand{\eaa}{\end{array}}
\newcommand{\ba}{\begin{eqnarray}}
\newcommand{\ea}{\end{eqnarray}}
\newtheorem{theo}{\bf Theorem}[section]
\newtheorem{lem}[theo]{\bf Lemma}
\newtheorem{pro}[theo]{\bf Proposition}
\newtheorem{cor}[theo]{\bf Corollary}
\newtheorem{rem}[theo]{\bf Remark}
\title[Nonlocal Poincar\'e inequalities]{ Non local Poincar\'e inequalities on Lie groups with polynomial volume growth }
\author{Emmanuel Russ}
\author{Yannick Sire}
\begin{document}

\maketitle

\begin{abstract} Let $G$ be a real connected Lie group with polynomial volume growth, endowed with its Haar measure $dx$.  Given a $C^2$ positive function $M$ on $G$, we give a sufficient condition for an $L^2$ Poincar\'e inequality with respect to the measure $M(x)dx$ to hold on $G$. We then establish a non-local Poincar\'e inequality on $G$ with respect to $M(x)dx$.
\end{abstract} 
\tableofcontents

%%%%%%%%%%%%%%%%%%%%%%%%%%%%%%%%%%%%%%%%
%%%%%%%%%%%%%%%%%%%%%%%%%%%%%%%%%%%%%%%%
\section{Introduction}

Let $G$ be a  unimodular connected Lie group endowed with a measure $M(x)\, dx$ where $M \in L^1(G)$ and $dx$ stands for the Haar measure on $G$. By ``unimodular'', we mean that the Haar measure is left and right-invariant. We always assume that $M=e^{-v}$ where $v$ is a $C^2$ function on $G$. If we denote by $\mathcal  G$  the Lie algebra  of $G$, we consider a family 
$$\mathbb X= \left \{ X_1,...,X_k \right \}$$
of left-invariant vector fields on $G$ satisfying the H\"ormander condition, i.e. $\mathcal G$ is the Lie algebra generated by the $X_i's$. A standard metric on $G$ , called the Carnot-Caratheodory metric, is naturally associated with $\mathbb X$ and is defined as follows: let $\ell : [0,1] \to G$ be an absolutely continuous path. We say that $\ell$ is admissible if there exist measurable functions $a_1,...,a_k : [0,1] \to \mathbb C$ such that, for almost every $t \in [0,1]$, one has 
$$\ell'(t)=\sum_{i=1}^k a_i(t) X_i(\ell(t)).$$
If $\ell$  is admissible, its length is defined by 
$$|\ell |= \int_0^1\left(\sum_{i=1}^k |a_i(t)|^2 \,dt \right)^{ \frac 12 }.$$

For all $x,y \in G $, define $d(x,y)$ as  the infimum of the lengths  of all admissible paths joining $x$ to $y$ (such a curve exists by the H\"ormander condition). This distance is left-invariant. For short, we denote by $|x|$ the distance between $e$, the neutral element of the group and $x$,  so that the distance from $x$ to $y$ is equal to  $|y^{-1}x|$. 

For all $r>0$, denote by $B(x,r)$ the open ball in $G$ with respect to the Carnot-Caratheodory distance and by $V(r)$ the Haar measure of any ball. There exists $d\in \N^{\ast}$ (called the local dimension of $(G,\mathbb X)$) and $0<c<C$ such that, for all $r\in (0,1)$,
$$
cr^d\leq V(r)\leq Cr^d,
$$
see \cite{nsw}. When $r>1$, two situations may occur (see \cite{guivarch}): 
\begin{itemize}
\item Either there exist $c,C,D >0$ such that, for all $r>1$, 
$$c r^D \leq V(r) \leq C r^D$$
where $D$ is called the dimension at infinity of the group (note that, contrary to $d$, $D$ does not depend on $\mathbb X$). The group is said to have polynomial volume growth. 
\item Or  there exist $c_1,c_2,C_1,C_2 >0$ such that, for all $r>1$, 
$$c_1 e^{c_2r} \leq V(r) \leq C_1 e^{C_2r}$$
and the group is said to have exponential volume growth. 
\end{itemize} 
When $G$ has polynomial volume growth, it is plain to see that there exists $C>0$ such that, for all $r>0$,
\begin{equation} \label{homog}
V(2r)\leq CV(r),
\end{equation}
which implies that there exist $C>0$ and $\kappa>0$ such that, for all $r>0$ and all $\theta>1$,
\begin{equation} \label{homogiter}
V(\theta r)\leq C\theta^{\kappa}V(r).
\end{equation}
\bigskip

\noindent  Denote by $H^1(G,d\mu_M)$ the Sobolev space of functions $f\in L^2(G,d\mu_M)$ such that $X_if\in L^2(G,d\mu_M)$ for all $1\leq i\leq k$. We are interested in $L^2$ Poincar\'e inequalities for the measure $d\mu_M$. In order to state sufficient conditions  for such an inequality to hold, we introduce the operator
$$L_M f =-M^{-1} \sum_{i=1}^k X_i \Big \{ M X_i f \Big \}$$ 
for all $f$ such that 
$$f\in {\mathcal D}(L_M):=\left\{g\in H^1(G,d\mu_M);\ \frac 1{\sqrt{M}} X_i \Big \{ M X_i f \Big \}\in L^2(G,dx),\,\, \forall 1\,\leq i\leq k\right\}.$$
One therefore has, for all $f\in {\mathcal D}(L_M)$ and
$g\in H^1(G,d\mu_M)$,
$$
\int_{G} L_Mf(x)g(x) d\mu_M(x)=\sum_{i=1}^k \int_{G} X_i  f(x)\cdot X_i g(x) d\mu_M(x).
$$
In particular, the operator $L_M$ is symmetric on $L^2(G,d\mu_M)$.\par
\noindent Following \cite{bbcg}, say that a $C^2$ function $W:G\rightarrow \R$ is a Lyapunov function if $W(x)\geq 1$ for all $x\in G$ and there exist constants $\theta>0$, $b\geq 0$ and $R>0$ such that, for all $x\in G$,
\begin{equation} \label{lyap}
-L_MW(x)\leq -\theta W(x)+b{\bf 1}_{B(e,R)}(x),
\end{equation}
where, for all $A\subset G$, ${\bf 1}_A$ denotes the characteristic function of $A$. We first claim:
\begin{theo} \label{poincmu}
Assume that $G$ is unimodular and  that there exists a Lyapunov function $W$ on $G$. Then, $d\mu_M$ satisfies the following $L^2$ Poincar\'e inequality: there exists $C>0$ such that, for all function $f\in H^1(G,d\mu_M)$ with $\int_G f(x)d\mu_M(x)=0$,
\begin{equation} \label{eqpoincmu}
\int_G \left\vert f(x)\right\vert^2 d\mu_M(x)\leq C\sum_{i=1}^k \int_G \left\vert X_if(x)\right\vert^2 d\mu_M(x).
\end{equation} 
\end{theo}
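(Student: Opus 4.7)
The plan is to follow the approach of Bakry--Barthe--Cattiaux--Guillin \cite{bbcg}, adapted to the sub-Riemannian setting. The idea is first to derive a defective Poincar\'e inequality from the Lyapunov condition \eqref{lyap}, with an extra term localized on $B(e,R)$, and then to absorb this local defect using a classical local Poincar\'e inequality on a sufficiently large ball, together with the finiteness of $\mu_M$ (which follows from $M\in L^1(G)$).

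For the first step, I divide \eqref{lyap} by $W\geq 1$, multiply by $f^2$ and integrate against $d\mu_M$ to obtain
\begin{equation*}
\theta\int_G f^2\,d\mu_M \leq \int_G \frac{L_MW}{W}\,f^2\,d\mu_M + b\int_{B(e,R)} f^2\,d\mu_M.
\end{equation*}
The symmetry identity stated in the excerpt, applied with $g=f^2/W$, rewrites the first term on the right as $\sum_i\int_G X_iW\cdot X_i(f^2/W)\,d\mu_M$. Expanding $X_i(f^2/W)=2fX_if/W-f^2 X_iW/W^2$ and applying $2(X_if)(fX_iW/W)\leq (X_if)^2+f^2(X_iW)^2/W^2$ to the cross term yields the pointwise bound $\sum_i X_iW\cdot X_i(f^2/W) \leq \sum_i (X_if)^2$, hence the defective estimate
\begin{equation*}
\theta\int_G f^2\,d\mu_M \leq \sum_{i=1}^k \int_G (X_if)^2\,d\mu_M + b\int_{B(e,R)} f^2\,d\mu_M.
\end{equation*}

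For the second step, I exploit the mean-zero hypothesis $\int f\,d\mu_M=0$. Fix $R_0\geq R$, set $A=B(e,R_0)$ and $m_A = \mu_M(A)^{-1}\int_A f\,d\mu_M$; then $m_A\,\mu_M(A)=-\int_{A^c} f\,d\mu_M$, so Cauchy--Schwarz gives
\begin{equation*}
m_A^2\,\mu_M(A) \leq \frac{\mu_M(A^c)}{\mu_M(A)}\int_G f^2\,d\mu_M.
\end{equation*}
Combined with $f^2\leq 2(f-m_A)^2+2m_A^2$ and the classical Jerison-type local Poincar\'e inequality for the H\"ormander system $\mathbb X$ on $A$, transferred from Haar measure to $d\mu_M$ via the two-sided bounds of $M$ on the compact set $\overline{A}$, this produces
\begin{equation*}
\int_{B(e,R)} f^2\,d\mu_M \leq 2C_{R_0}\sum_{i=1}^k\int_G (X_if)^2\,d\mu_M + 2\,\frac{\mu_M(A^c)}{\mu_M(A)}\int_G f^2\,d\mu_M.
\end{equation*}
Since $\mu_M(G)<\infty$, I choose $R_0$ large enough that $2b\,\mu_M(A^c)/\mu_M(A) \leq \theta/2$; plugging into the defective inequality and rearranging then yields \eqref{eqpoincmu}.

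The main technical obstacle is justifying the integration by parts and the pointwise manipulations of Step 1: the symmetry identity requires both $W\in\mathcal D(L_M)$ and $f^2/W \in H^1(G,d\mu_M)$, neither of which is automatic for a general Lyapunov function $W$ (which only needs to satisfy $W\geq 1$) or for an arbitrary $f\in H^1(G,d\mu_M)$. The standard remedy is to prove the inequality first for $f\in C_c^\infty(G)$ (with, if necessary, a smooth truncation of $W$) and then extend by a density argument. A minor secondary verification is the passage from the unweighted local Poincar\'e inequality on $A$ to the $d\mu_M$-weighted one, which is immediate from the positivity and continuity of $M$ on the compact set $\overline{A}$.
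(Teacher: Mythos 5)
Your proof is correct, and its first half (the defective inequality obtained from \eqref{lyap} by multiplying by $f^2/W$, integrating by parts, and applying Cauchy--Schwarz to the cross term) is exactly the paper's Lemma \ref{lempoinc}, down to the regularity remedy you flag at the end: the paper also proves the integration-by-parts identity for compactly supported $f$ first and then passes to general $f\in H^1(G,d\mu_M)$ by a truncation with cutoffs $\chi_n$. No truncation of $W$ is needed, since $W$ is $C^2$ and $f$ has compact support at that stage.

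Where you diverge from the paper is in absorbing the local term $b\int_{B(e,R)} f^2\,d\mu_M$. The paper instead works with $f=g-c$ and \emph{chooses} the constant $c$ so that $\int_{B(e,R)} f\,d\mu_M=0$; the local Poincar\'e inequality \eqref{poincdx} on $B(e,R)$ then controls $\int_{B(e,R)} f^2\,d\mu_M$ outright, giving $\int_G(g-c)^2 d\mu_M\leq C\sum_i\int_G|X_ig|^2 d\mu_M$, and one concludes by noting that the $\mu_M$-variance $\int_G(g-\bar g)^2d\mu_M$ (with $\bar g$ the global $\mu_M$-mean) is $\leq\int_G(g-c)^2d\mu_M$. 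You instead keep $f$ normalized by the global mean-zero condition, enlarge the ball to $A=B(e,R_0)$, bound $m_A^2\mu_M(A)$ via Cauchy--Schwarz on $A^c$ and the finiteness of $\mu_M$, and choose $R_0$ so that the resulting coefficient $2b\,\mu_M(A^c)/\mu_M(A)$ can be absorbed into $\theta$. Both arguments close; the paper's choice-of-constant trick avoids enlarging the ball and does not need to invoke $\mu_M(G)<\infty$ at this point, while your version is perhaps closer to the original presentation in \cite{bbcg} and makes the use of integrability of $M$ explicit.
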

Let us give, as a corollary, a sufficient condition on $v$ for (\ref{eqpoincmu}) to hold:
\begin{cor} \label{suffpoincmu}
Assume that $G$ is unimodular and there exist constants $a\in (0,1)$, $c>0$ and $R>0$ such that, for all $x\in G$ with $\left\vert x\right\vert>R$,
\begin{equation} \label{assumpoinc}
a\sum_{i=1}^k \left\vert X_iv(x)\right\vert^2-\sum_{i=1}^k X_i^2v(x)\geq c.
\end{equation}
Then (\ref{eqpoincmu}) holds.
\end{cor}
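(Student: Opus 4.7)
The plan is to invoke Theorem \ref{poincmu}: once we exhibit a Lyapunov function $W$ satisfying \eqref{lyap} under hypothesis \eqref{assumpoinc}, the Poincar\'e inequality \eqref{eqpoincmu} follows directly. The natural guess is $W:=e^{sv}$ for a parameter $s\in(0,1)$ (with a positive multiplicative constant included if needed to ensure $W\geq 1$). Using the identity $M^{-1}X_iM=-X_iv$, the operator can be written as
$$L_Mf=-\sum_{i=1}^k X_i^2f+\sum_{i=1}^k (X_iv)(X_if),$$
and then $X_iW=s(X_iv)W$ and $X_i^2W=sW\,X_i^2v+s^2W(X_iv)^2$ lead after a short calculation to
$$-L_MW=-s\,W\Big[(1-s)\sum_{i=1}^k|X_iv|^2-\sum_{i=1}^k X_i^2v\Big].$$

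The crucial calibration is the choice $s:=1-a\in(0,1)$ (which uses $a<1$), turning the bracket into $a\sum_{i=1}^k|X_iv|^2-\sum_{i=1}^k X_i^2v$---exactly the quantity bounded from below in \eqref{assumpoinc}. Consequently, for $|x|>R$,
$$-L_MW(x)\leq -(1-a)c\,W(x),$$
i.e.\ \eqref{lyap} is satisfied outside $B(e,R)$ with $\theta:=(1-a)c>0$. On the compact ball $B(e,R)$, continuity of $v$ renders both $W$ and $L_MW$ bounded, so the inequality \eqref{lyap} there is obtained by choosing $b$ sufficiently large.

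The last ingredient, and what I expect to be the main (if mild) obstacle, is the normalization $W\geq 1$ on the whole of $G$. This reduces to $v$ being bounded below on $G$, a property that should be enforced by $M=e^{-v}\in L^1(G)$ together with the coercivity built into \eqref{assumpoinc}, but must be checked carefully. Granting this, one multiplies $W$ by a large positive constant (adjusting $b$ accordingly) to obtain a genuine Lyapunov function, at which point Theorem \ref{poincmu} closes the argument. The algebra itself is elementary; the real content of the proof is the exponential ansatz and the matching of the parameter $s$ with $a$.
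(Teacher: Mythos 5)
Your argument is essentially identical to the paper's: the paper defines $W(x):=e^{\gamma\left(v(x)-\inf_G v\right)}$, computes $-L_MW=\gamma\left(\sum_i X_i^2v-(1-\gamma)\sum_i|X_iv|^2\right)W$, and chooses $\gamma:=1-a$ so that \eqref{assumpoinc} gives $-L_MW\leq -\theta W$ off $B(e,R)$ with $\theta=c\gamma$, then sets $b:=\max_{B(e,R)}\{-L_MW+\theta W\}$ — exactly your calibration $s=1-a$. On the point you flag: the paper handles $W\geq 1$ by subtracting $\inf_G v$ in the exponent, which is precisely the multiplicative normalization you propose, but it tacitly assumes $\inf_G v>-\infty$ without comment, so your concern is shared with (and not explicitly resolved by) the published argument.
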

Notice that, if (\ref{assumpoinc}) holds with $a\in \left(0,\frac 12\right)$, then the Poincar\'e inequality (\ref{eqpoincmu}) has the following self-improvement:
\begin{pro} \label{poincimprovedM} 
Assume that $G$ is unimodular and that there exist constants $c>0$, $R>0$ and $\varepsilon\in (0,1)$ such that, for all $x\in G$,
\begin{equation} \label{c}
\frac{1-\varepsilon}2\sum_{i=1}^k \left\vert X_iv(x)\right\vert^2
-\sum_{i=1}^k X_i^2v(x)\geq c\mbox{ whenever }\left\vert x\right\vert>R.
\end{equation}
%(CM~: chgt cosmetique de presentation pour harmoniser avec les
%conventions) 
Then there exists $C>0$ such that, for all function $f\in H^1(G,d\mu_M)$ such that $\int_G f(x)d\mu_M(x)=0$:
\begin{equation} \label{pim} 
 \sum_{i=1}^k \int_{G} \left\vert X_if(x)\right\vert^2d\mu_M(x)
 \ge C \, \int_{G} \left\vert f(x)\right\vert
 ^2\left(1+\sum_{i=1}^k \left\vert X_iv(x)\right\vert^2\right)d\mu_M(x)
\end{equation}
\end{pro}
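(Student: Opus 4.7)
The plan is to test the natural integration-by-parts identity for $L_M$ against $f^2$, apply Young's inequality with a parameter tuned to the sharper hypothesis (\ref{c}), and then localize using the plain Poincar\'e inequality (\ref{eqpoincmu}). Observe first that since $(1-\varepsilon)/2\in(0,1/2)\subset(0,1)$, Corollary \ref{suffpoincmu} applies and (\ref{eqpoincmu}) is available for every $f\in H^1(G,d\mu_M)$ with $\int_G f\,d\mu_M=0$.

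I would start from the identity
$$
2\sum_{i=1}^k\int_G f\,X_iv\,X_if\,d\mu_M=\int_G\Big(\sum_{i=1}^k|X_iv|^2-\sum_{i=1}^k X_i^2v\Big)f^2\,d\mu_M,
$$
which follows by direct integration by parts for $f$ smooth and compactly supported (using $M=e^{-v}$ and unimodularity) and extends to $f\in H^1(G,d\mu_M)$ by a density argument. Then, for any $\lambda>0$, Young's inequality applied pointwise to $2(fX_iv)(X_if)$ and summed in $i$ yields
$$
\int_G\Big[(1-\lambda)\sum_{i=1}^k|X_iv|^2-\sum_{i=1}^k X_i^2v\Big]f^2\,d\mu_M\le\frac1\lambda\sum_{i=1}^k\int_G|X_if|^2\,d\mu_M.
$$

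Now I would pick $\lambda=\frac{1+\varepsilon}{2}-\delta$ for a small $\delta>0$, so that $1-\lambda=\frac{1-\varepsilon}{2}+\delta$. By hypothesis (\ref{c}), on $\{|x|>R\}$ the bracketed integrand is bounded below by $c+\delta\sum_{i=1}^k|X_iv|^2$, hence by $\min(c,\delta)\bigl(1+\sum_{i=1}^k|X_iv|^2\bigr)$. On the precompact ball $B(e,R)$, the same integrand is a continuous function and is thus bounded below by some $-K$. Splitting the integral over $G$ into the two pieces gives
$$
\min(c,\delta)\int_{\{|x|>R\}}\!\bigl(1+\sum_{i=1}^k|X_iv|^2\bigr)f^2\,d\mu_M \le \frac1\lambda\sum_{i=1}^k\int_G|X_if|^2\,d\mu_M+K\int_{B(e,R)}f^2\,d\mu_M.
$$

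The last term is handled by (\ref{eqpoincmu}), which uses precisely the mean-zero assumption on $f$. On $B(e,R)$ the weight $1+\sum_{i=1}^k|X_iv|^2$ is bounded (since $v\in C^2$), so the integral $\int_{B(e,R)}(1+\sum_{i=1}^k|X_iv|^2)f^2\,d\mu_M$ is again controlled by the plain Poincar\'e inequality. Summing the two contributions yields (\ref{pim}). The essential point of the argument is the choice of $\lambda$: the strengthened hypothesis forces the coefficient of $\sum_{i=1}^k|X_iv|^2$ in (\ref{c}) to be strictly less than $1/2$, which is exactly what allows $1-\lambda$ to exceed $(1-\varepsilon)/2$ while $\lambda$ stays positive, thereby leaving a surplus $\delta\sum_{i=1}^k|X_iv|^2$ on the left to be promoted into the weight. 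The only mild technical obstacle is the justification of the integration-by-parts identity for arbitrary $f\in H^1(G,d\mu_M)$ rather than $C_c^\infty$ functions; this is carried out by the standard approximation, using the local boundedness of $v$ and of $X_iv$.
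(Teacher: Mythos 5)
Your proof is correct and follows essentially the same route as the paper's. The paper's argument (substituting $g=fM^{1/2}$, expanding $\sum_i\int|X_if|^2\,d\mu_M$, integrating $\tfrac12 X_i(g^2)X_iv$ by parts, and dropping the nonnegative term $\sum_i\int|X_ig|^2\,dx$) is exactly your integration-by-parts identity followed by Young's inequality with the fixed choice $\lambda=1/2$; that choice already works since the coefficient $(1-\varepsilon)/2$ in (\ref{c}) is strictly below $1/2$, leaving the surplus $\tfrac{\varepsilon}{4}\sum_i|X_iv|^2$. Your parametrized $\lambda$ and explicit near/far splitting are equivalent variants of the same idea, and your final appeal to (\ref{eqpoincmu}) (via Corollary~\ref{suffpoincmu}) to absorb the compactly supported term matches the paper's conclusion.
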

We finally obtain a Poincar\'e inequality for $d\mu_M$ involving a non local term:
\begin{theo} \label{mainth} 
Let $G$ be  a  unimodular Lie group with polynomial growth.
Let $d\mu_M= M dx$ be a measure absolutely continuous with respect to the Haar measure on $G$ where $M=e^{-v} \in L^1(G)$ and $v\in C^2(G)$. Assume that there exist constants $c>0$, $R>0$ and $\varepsilon\in (0,1)$ such that (\ref{c}) holds.
Let $\alpha\in
(0,2)$. Then there exists $\lambda_\alpha(M)>0$  such that, for any function $f\in {\mathcal D}(G)$
satisfying $\int_{G} f(x) \, d\mu_M(x)=0$,
 \begin{eqnarray} \label{poincfrac} \iint _{G \times G }
   \frac{\left\vert f(x)-f(y)\right\vert^2}{V\left(\left\vert
       y^{-1} x\right\vert\right)\left\vert
       y^{-1} x\right\vert^{\alpha}} \, dx\, d\mu_M(y) \ge
   \lambda_\alpha(M) \\
   \, \int_{\R^n} \left\vert
     f(x)\right\vert^2 \left(1+\sum_{i=1}^k \left\vert X_iv(x)\right\vert^2\right) \,d\mu_M(x). \nonumber
\end{eqnarray}
\end{theo}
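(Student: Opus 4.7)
My plan is to reduce the non-local Poincar\'e inequality \eqref{poincfrac} to the weighted local Poincar\'e inequality \eqref{pim} of Proposition \ref{poincimprovedM} by a heat kernel subordination argument. The idea has two parts: first, identify the non-local bilinear form on the left-hand side of \eqref{poincfrac} with the quadratic form of $(L_M)^{\alpha/2}$ (up to an equivalence of kernels); second, lift the operator inequality $L_M\ge C(1+W)$ that is equivalent to Proposition \ref{poincimprovedM} (with $W:=\sum_i|X_iv|^2$) to the fractional level.

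For the first part, let $p_t(x,y)$ be the heat kernel of $L_M$ with respect to $d\mu_M$ and apply Bochner's subordination formula
$$
\lambda^{\alpha/2}=\frac{\alpha/2}{\Gamma(1-\alpha/2)}\int_0^\infty(1-e^{-t\lambda})\,t^{-1-\alpha/2}\,dt
$$
to $\lambda=L_M$. One obtains
$$
\langle (L_M)^{\alpha/2}f,f\rangle_{L^2(d\mu_M)}=c_\alpha\iint_{G\times G}|f(x)-f(y)|^2\,\Phi_\alpha(x,y)\,d\mu_M(x)\,d\mu_M(y)
$$
with $\Phi_\alpha(x,y):=\int_0^\infty p_t(x,y)\,t^{-1-\alpha/2}\,dt$. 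The polynomial volume growth of $G$ together with the H\"ormander condition yield two-sided Gaussian estimates $p_t(x,y)\asymp V(\sqrt t)^{-1}\exp(-c|y^{-1}x|^2/t)$ of Varopoulos--Saloff-Coste--Coulhon type, so that after the change of variables $u=|y^{-1}x|^2/t$ and using \eqref{homogiter} one finds
$$
\Phi_\alpha(x,y)\asymp\frac{1}{V(|y^{-1}x|)\,|y^{-1}x|^\alpha}.
$$
Since $M=e^{-v}$ is bounded above, an $x\leftrightarrow y$ symmetrisation identifies the asymmetric form $\iint\cdot\,dx\,d\mu_M(y)$ with $\tfrac12\iint\cdot\,(M(x)+M(y))\,dx\,dy$, and this dominates, up to $\|M\|_\infty$, the symmetric weighting $d\mu_M(x)\,d\mu_M(y)$. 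Hence
$$
\iint_{G\times G}\frac{|f(x)-f(y)|^2}{V(|y^{-1}x|)\,|y^{-1}x|^\alpha}\,dx\,d\mu_M(y)\ge c\,\langle (L_M)^{\alpha/2}f,f\rangle_{L^2(d\mu_M)}.
$$

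The heart of the argument, and the main obstacle, is the second part: turning the operator inequality $L_M\ge C(1+W)$ on the zero-mean subspace of $L^2(d\mu_M)$ into the fractional bound $\langle(L_M)^{\alpha/2}f,f\rangle\ge c_\alpha\int|f|^2(1+W)\,d\mu_M$. L\"owner's theorem ensures $s\mapsto s^{\alpha/2}$ is operator monotone on $[0,\infty)$ for $\alpha/2\in(0,1]$, which combined with the spectral gap $L_M\ge\lambda_0 I$ from Theorem \ref{poincmu} delivers only $(L_M)^{\alpha/2}\ge c(1+W)^{\alpha/2}$, an inequality strictly weaker than needed once $W$ becomes large. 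The plan for bridging this gap is to split $f$ through a smooth cut-off $\chi$ adapted to the level sets $\{W\le M_0\}$: on $\{W\le M_0\}$ the fractional spectral bound closes the estimate because $1+W\asymp(1+W)^{\alpha/2}$; on $\{W>M_0\}$ one applies \eqref{pim} directly to $\chi f$ and then re-imports the fractional kernel by chaining the local Dirichlet energy along Carnot--Carath\'eodory geodesics in a Garsia--Rodemich--Rumsey manner, mirroring the Euclidean strategy of Mouhot--Russ--Sire in the Gaussian setting. Controlling the commutators produced by the cut-off against the growth of $W$, and ensuring that the chaining argument is compatible with the polynomial-growth doubling \eqref{homog}--\eqref{homogiter}, will be the main analytical work.
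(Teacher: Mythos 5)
Your proposal correctly identifies the two ingredients needed — a link between the non-local form and a fractional power of $L_M$, and a transfer of the operator inequality $L_M\geq\lambda(1+\sum_i|X_iv|^2)$ to the fractional level — but the method you propose for the first ingredient is precisely the one the paper is designed to avoid, and it would fail here.

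You propose to identify the non-local form with $\langle L_M^{\alpha/2}f,f\rangle$ by Bochner subordination together with two-sided Gaussian bounds $p_t(x,y)\asymp V(\sqrt t)^{-1}\exp(-c|y^{-1}x|^2/t)$. If $p_t$ denotes the heat kernel of $L_M$ in $L^2(d\mu_M)$, such bounds are \emph{not} available: as the authors stress in the remark closing Section~\ref{proofmain}, the known proofs (e.g.\ \cite{crt}) of Strichartz-type inequalities use pointwise Gaussian estimates of the semigroup kernel, and ``we deal with the operator $L_M$ for which these pointwise estimates are not available, but it turns out that $L^2$ off-diagonal estimates are enough for our purpose.'' If instead $p_t$ is meant to be the heat kernel of the unweighted sub-Laplacian $-\sum X_i^2$ (for which Varopoulos--Saloff-Coste--Coulhon Gaussian bounds do hold on groups of polynomial growth), then your $\Phi_\alpha$ computes the quadratic form of $(-\sum X_i^2)^{\alpha/2}$ in $L^2(dx)$, not that of $L_M^{\alpha/2}$ in $L^2(d\mu_M)$, and there is no straightforward comparison between the two without pointwise control of $M$ along both variables. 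The paper's actual route is an operator-theoretic surrogate: it proves Gaffney-type $L^2$ off-diagonal bounds for the resolvent $(I+tL_M)^{-1}$ (Lemma~\ref{off}) — which hold for \emph{any} self-adjoint Dirichlet form operator, no pointwise kernel estimates required — combines them with the spectral quadratic estimate $\|L_M^{\alpha/4}f\|^2\lesssim\int_0^\infty t^{-1-\alpha/2}\|tL_M(I+tL_M)^{-1}f\|^2\,dt$ (Lemma~\ref{quadratic}), and controls the integrand by decomposing $G$ into a bounded-overlap covering by balls of radius $\sqrt t$, subtracting local Haar averages, and letting the exponential decay from Gaffney absorb the annular contributions (Lemma~\ref{controllalpha}). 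Replacing pointwise kernel bounds by these $L^2$ off-diagonal estimates is the central novelty of the paper, and it is exactly the ingredient your plan is missing.

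On the second ingredient, you correctly observe that operator monotonicity of $s\mapsto s^{\alpha/2}$ only delivers $L_M^{\alpha/2}\geq\lambda^{\alpha/2}\mu^{\alpha/2}$, i.e.\ the weight $(1+\sum_i|X_iv|^2)^{\alpha/2}$ rather than $(1+\sum_i|X_iv|^2)$. This is worth noting, because the paper's proof does not bridge that gap either: the functional-calculus step produces exactly the exponent $\alpha/2$ on the weight, and that is what is in fact established. The cut-off/chaining scheme you sketch to upgrade $\alpha/2$ to $1$ is not carried out, and it is not clear it could be, since for $\alpha<2$ the two weights are genuinely inequivalent when $\sum_i|X_iv|^2$ is unbounded; you should not assume the stronger exponent is attainable without a proof, and in the present paper it is not what the argument yields.
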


Note that  (\ref{poincfrac}) is an improvement of (\ref{pim}) in terms of fractional nonlocal quantities. The proof follows the same line  as the paper \cite{MRS} but we concentrate here on a more geometric context. 

In order to prove Theorem \ref{mainth}, we need to introduce fractional powers of $L_M$. This is the object of the following developments. Since the operator $L_M$ is symmetric and non-negative on $L^2(G,d\mu_M)$, we can define the
usual power $L^{\beta}$ for any $\beta\in (0,1)$ by means of spectral
theory.\par
\noindent Section \ref{poinc1} is devoted to the proof of Theorem \ref{poincmu} and Corollary \ref{suffpoincmu}. Then, in Section \ref{proofmain}, we check $L^2$ ``off-diagonal'' estimates for the resolvent of $L_M$ and use them to establish Theorem \ref{mainth}.

\section{A proof of the Poincar\'e inequality for $d\mu_M$} \label{poinc1}
We follow closely the approach of \cite{bbcg}. Recall first that the following $L^2$ local Poincar\'e inequality holds on $G$ for the measure $dx$: for all $R>0$, there exists $C_R>0$ such that, for all $x\in G$, all $r \in (0,R)$, all ball $B:=B(x,r)$  and all function $f\in C^{\infty}(B)$,
\begin{equation} \label{poincdx}
\int_B \left\vert f(x)-f_B\right\vert^2dx\leq C_R r^2\sum_{i=1}^k \int_B \left\vert X_if(x)\right\vert^2dx,
\end{equation}
where $f_B:=\frac 1{V(r)}\int_B f(x)dx$.  In the Euclidean context,  Poincar\'e inequalities for vector-fields satisfying H\"ormander conditions were obtained by Jerison in \cite{jerison}. A proof of (\ref{poincdx}) in the case of unimodular Lie groups can be found in \cite{saloff1995parabolic}, but the idea goes back to \cite{varo}. A nice survey on this topic can be found in \cite{sobmetpoinc}. Notice that no global growth assumption on the volume of balls is required for (\ref{poincdx}) to hold.  \par
\bigskip

\noindent The proof of (\ref{eqpoincmu}) relies on the following inequality:
\begin{lem} \label{lempoinc}
For all function $f\in H^1(G,d\mu_M)$ on $G$,
\begin{equation} \label{firstpart}
\int_G \frac{L_MW}{W}(x)f^2(x)d\mu_M(x) \leq \sum_{i=1}^k \int_G \left\vert X_if(x)\right\vert^2d\mu_M(x).
\end{equation}
\end{lem}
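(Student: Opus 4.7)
The inequality follows from the standard Lyapunov/carré du champ computation: we use $g=f^2/W$ as a test function in the symmetry identity
$$\int_G L_M W(x)\, g(x)\, d\mu_M(x) = \sum_{i=1}^k\int_G X_iW(x)\cdot X_ig(x)\, d\mu_M(x).$$
Since $W\geq 1$, for $f\in H^1(G,d\mu_M)\cap L^\infty(G)$ the test function $g=f^2/W$ is admissible, and the product/quotient rule gives
$$X_i\!\left(\frac{f^2}{W}\right) = \frac{2fX_if}{W} - \frac{f^2\, X_iW}{W^2}.$$
Substituting and rearranging, one obtains the identity
$$\int_G \frac{L_MW}{W}\, f^2\, d\mu_M \;-\; \sum_{i=1}^k\int_G |X_if|^2\, d\mu_M \;=\; -\sum_{i=1}^k\int_G \left(X_if - \frac{f\, X_iW}{W}\right)^{\!2} d\mu_M,$$
which is manifestly nonpositive and yields (\ref{firstpart}).

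\textbf{Key step in detail.} Expanding the square on the right-hand side produces precisely the three integrands $(X_if)^2$, $-2fX_iW\,X_if/W$, and $f^2(X_iW)^2/W^2$; the last two match the two terms arising from $X_iW\cdot X_i(f^2/W)$, while the first cancels the $-\sum_i\int(X_if)^2$ contribution. Alternatively, one can bypass the identity and simply apply the elementary bound $2ab\leq a^2+b^2$ to the cross term $2fX_iW\cdot X_if/W$, which absorbs the positive $f^2(X_iW)^2/W^2$ term exactly; either route gives the stated inequality.

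\textbf{Regularity/approximation.} The only delicate point is justifying that $g=f^2/W$ belongs to the relevant class to use the symmetry formula for a generic $f\in H^1(G,d\mu_M)$, since $W$ is only assumed $C^2$ and a priori unbounded. The natural approach is a truncation-and-approximation argument: first establish (\ref{firstpart}) for $f\in C_c^\infty(G)$, where $g=f^2/W$ is compactly supported, $C^1$, and bounded (using $W\in C^2$ and $W\geq 1$) so that the integration by parts above is standard; then extend to $f\in H^1(G,d\mu_M)$ by approximating $f$ by a sequence $f_n\in C_c^\infty(G)$ converging to $f$ in $H^1(G,d\mu_M)$ and using Fatou's lemma on the left-hand side (recall $L_MW/W\geq -\theta + b\mathbf{1}_{B(e,R)}/W$ is bounded below thanks to (\ref{lyap}) and $W\geq 1$, so one can split it into a bounded part handled by dominated convergence and a nonnegative part handled by Fatou).

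\textbf{Main obstacle.} The computation itself is short and algebraic; the only real concern is the approximation/density argument and the fact that $L_MW/W$ is only bounded below, not bounded. This is exactly what lets Fatou go through on the left while the right-hand side is continuous under $H^1(G,d\mu_M)$-approximation. The existence of a suitable dense class of smooth compactly supported functions in $H^1(G,d\mu_M)$ is standard on a Lie group with a $C^2$ weight.
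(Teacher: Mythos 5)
Your computation is precisely the paper's: testing the symmetry identity against $f^2/W$ and completing the square to get
\[
\int_G \frac{L_MW}{W}f^2\,d\mu_M=\sum_i\int_G|X_if|^2\,d\mu_M-\sum_i\int_G\Bigl(X_if-\frac{f}{W}X_iW\Bigr)^2 d\mu_M
\]
is exactly the proof given. The one place you diverge is the limiting step: the paper does not approximate $f$ by smooth compactly supported functions in the $H^1(G,d\mu_M)$-norm, but instead applies the compactly supported case directly to $f\chi_n$ for a nondecreasing sequence of smooth cutoffs $\chi_n$ with ${\bf 1}_{B(e,nR)}\le\chi_n\le 1$ and $|X_i\chi_n|\le 1$, then passes to the limit by monotone convergence on the left and dominated convergence on the right. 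This sidesteps any appeal to density of $C_c^\infty(G)$ in the weighted Sobolev space, which your route tacitly requires. Your Fatou-plus-splitting argument also works (and is in fact spelled out more carefully than the paper's terse ``monotone convergence''), though note the small sign slip: from \eqref{lyap} one gets $L_MW/W\ge\theta-b\,{\bf 1}_{B(e,R)}/W$, not $-\theta+b\,{\bf 1}_{B(e,R)}/W$; the conclusion that it is bounded below is unaffected.
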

{\bf Proof: } Assume first that $f$ is compactly supported on $G$.  Using the definition of $L_M$, one has
$$
\begin{array}{lll}
\displaystyle \int_G \frac{L_MW}{W}(x)f^2(x)d\mu_M(x) & = & \displaystyle \sum_{i=1}^k \int_G X_i\left(\frac{f^2}W\right)(x) \cdot X_iW(x)d\mu_M(x)\\
& = & \displaystyle  2\sum_{i=1}^k \int_G \frac fW(x) X_if(x)\cdot X_iW(x)d\mu_M(x)\\
& & \displaystyle -\sum_{i=1}^k \int_G \frac{f^2}{W^2}(x) \left\vert X_iW(x)\right\vert^2 d\mu_M(x)\\
& = & \displaystyle \sum_{i=1}^k \int_G \left\vert X_i f(x)\right\vert^2d\mu_M(x) \\
& & \displaystyle -\sum_{i=1}^k \int_G \left\vert X_if-\frac fW X_iW\right\vert^2(x)d\mu_M(x)\\
& \leq & \displaystyle \sum_{i=1}^k \int_G \left\vert X_if(x)\right\vert^2d\mu_M(x).
\end{array}
$$
Notice that all the previous integrals are finite because of the support condition on $f$. Now, if $f$ is as in Lemma \ref{lempoinc}, consider a nondecreasing sequence of smooth compactly supported functions $\chi_n$ satisfying
$$
{\bf 1}_{B(e,nR)} \leq \chi_n\leq 1\mbox{ and } \left\vert X_i\chi_n\right\vert\leq 1\mbox{ for all }1\leq i\leq k.
$$
Applying (\ref{firstpart}) to $f\chi_n$ and letting $n$ go to $+\infty$ yields the desired conclusion, by use of the monotone convergence theorem in the left-hand side and the dominated convergence theorem in the right-hand side. \hfill\fin\par

\medskip

\noindent Let us now establish (\ref{eqpoincmu}). Let $g$ be a smooth function on $G$ and let $f:=g-c$ on $G$ where $c$ is a constant to be chosen. By assumption (\ref{lyap}), 
\begin{equation} \label{twoterms}
\int_G f^2(x)d\mu_M(x)\leq \int_G f^2(x)\frac{L_MW}{\theta W}(x)d\mu_M(x)+\int_{B(e,R)} f^2(x)\frac{b}{\theta W}(x)d\mu_M(x).
\end{equation}
\par
\noindent Lemma \ref{lempoinc} shows that (\ref{firstpart}) holds.  Let us now turn to the second term in the right-hand side of (\ref{twoterms}). Fix $c$ such that $\int_{B(e,R)} f(x)d\mu_M(x)=0$. By (\ref{poincdx}) applied to $f$ on $B(e,R)$ and the fact that $M$ is bounded from above and below on $B(e,R)$, one has
$$
\int_{B(e,R)} f^2(x)d\mu_M(x)\leq CR^2\sum_{i=1}^k \int_{B(e,R)} \left\vert X_if(x)\right\vert^2d\mu_M(x)
$$
where the constant $C$ depends on $R$ and $M$. 
Therefore, using the fact that $W\geq 1$ on $G$,
\begin{equation} \label{secondpart}
\int_{B(e,R)} f^2(x)\frac{b}{\theta W}(x)d\mu_M(x)\leq CR^2\sum_{i=1}^k \int_{B(e,R)} \left\vert X_if(x)\right\vert^2d\mu_M(x)
\end{equation}
where the constant $C$ depends on $R, M, \theta$ and $b$.  
Gathering (\ref{twoterms}), (\ref{firstpart}) and (\ref{secondpart}) yields
$$
\int_G (g(x)-c)^2d\mu_M(x)\leq C\sum_{i=1}^k \int_G \left\vert X_ig(x)\right\vert^2d\mu_M(x),
$$
which easily implies (\ref{eqpoincmu}) for the function $g$ (and the same dependence for the constant $C$). \hfill\fin\par

\bigskip

\noindent {\bf Proof of Corollary \ref{suffpoincmu}: } according to Theorem \ref{poincmu}, it is enough to find a Lyapunov function $W$. Define
$$
W(x):=e^{\gamma\left(v(x)-\inf_Gv\right)}
$$
where $\gamma>0$ will be chosen later. Since
$$
-L_MW(x)=\gamma\left(\sum_{i=1}^k X_i^2v(x)-(1-\gamma)\sum_{i=1}^k \left\vert X_iv(x)\right\vert^2\right)W(x),
$$
$W$ is a Lyapunov function for $\gamma:=1-a$ because of the assumption on $v$. Indeed, one can take $\theta=c \gamma$ and $b= \max_{B(e,R)} \Big \{-L_M W+\theta W \Big \}$ (recall that $M$ is a $C^2$ function). \hfill\fin\par

\bigskip

\noindent Let us now prove Proposition \ref{poincimprovedM}. Observe first that, since $v$ is $C^2$ on $G$ and (\ref{c}) holds, there exists $\alpha\in \R$ such that, for all $x\in G$,
\begin{equation} \label{alpha}
\frac{1-\varepsilon}2\sum_{i=1}^k \left\vert X_iv(x)\right\vert^2
-\sum_{i=1}^k X_i^2v(x)\geq \alpha.
\end{equation}
Let $f$ be as in the
statement of Proposition \ref{poincimprovedM} and let $g:=fM^{\frac 12}$.
Since, for all $1\leq i\leq k$,
$$
X_if=M^{-\frac 12}X_ig-\frac 12 g M^{-\frac 32} X_iM. 
$$
Assumption (\ref{alpha}) yields two positive constants $\beta,\gamma$ such that
\begin{multline} \label{estim1}
\displaystyle \sum_{i=1}^k \int_{G} \left\vert X_if(x)\right\vert^2(x) \,
d\mu_M(x)= \\
\displaystyle \sum_{i=1}^k \int_{G} \left(\left\vert X_ig(x)\right\vert^2 
+\frac 14g^2(x)\left\vert X_i v(x)\right\vert^2+
g(x)X_ig(x)X_i v(x)\right) \, dx\\
= \displaystyle \sum_{i=1}^k \int_{G} \left(\left\vert X_i
   g(x)\right\vert^2 
+\frac 14g^2(x)\left\vert X_i v(x)\right\vert^2+\frac 12
X_i\left(g^2\right)(x)X_iv(x)\right) \, dx\\
\geq \displaystyle \sum_{i=1}^k \int_{G} g^2(x) \left(\frac 14\left\vert
   X_iv(x)\right\vert^2-\frac 12 X_i^2v(x)\right) \, dx\\
\geq  \displaystyle \sum_{i=1}^k \int_{G} f^2(x)\left(\beta \left\vert
   X_iv(x)\right\vert^2-\gamma\right)d\mu_M(x).
\end{multline}
The conjunction of (\ref{eqpoincmu}), which holds because of (\ref{c}), and (\ref{estim1}) yields the
desired conclusion. \hfill\fin\par

\section{Proof of Theorem \ref{mainth}} \label{proofmain}

We divide the proof into several steps. 

\subsection{Rewriting the improved Poincar\'e inequality}

By the definition of $L_M$, the conclusion of Proposition \ref{poincimprovedM} means, in terms of operators  in $L^2(G,d\mu_M)$, that, for some $\lambda>0$,
\begin{equation} \label{ineqop}
L_M\geq \lambda \mu, 
\end{equation}
where $\mu$ is the multiplication operator by $1+\sum_{i=1}^k \left\vert X_iv\right\vert^2$. Using a functional calculus argument (see \cite{davies}, p.
110), one deduces from (\ref{ineqop}) that, for any $\alpha\in (0,2)$,
\[
L_M^{\alpha/2}\geq \lambda^{\alpha/2}\mu^{\alpha/2}
\]
which implies, thanks to the fact $L_M^{\alpha/2} = (L_M^{\alpha/4})^2$ and the
symmetry of $L_M^{\alpha/4}$ on $L^2(G,d\mu_M)$, that
$$
\int_{G} \left\vert f(x)\right\vert^2 \left(1+\sum_{i=1}^k \left\vert X_iv(x)\right\vert^2\right)^{\alpha/2}d\mu_M(x)\leq  $$
$$C \int_{G} \left\vert L_M^{\alpha/4}f(x)\right\vert^2 \, d\mu_M(x) =
C \left\| L_M^{\alpha/4} f \right\|^2 _{L^2(G,d\mu_M)}.
$$
The conclusion of Theorem \ref{mainth} will follow by estimating the quantity 
$ \left\| L^{\alpha/4} f \right\|^2 _{L^2(G,d\mu_M)}.$

\subsection{Off-diagonal $L^2$ estimates for the resolvent of $L_M$}
The crucial estimates to derive the desired inequality are some $L^2$ ``off-diagonal'' estimates for the resolvent of $L_M$, in the spirit of \cite{gaff} . This is the object of the following lemma.  
\begin{lem} \label{off} There exists $C$ with the following property: for all closed disjoint subsets
$E,F\subset G$ with $\mbox{d}(E,F)=:d>0$, all function $f\in
L^2(G,d\mu_M)$ supported in $E$ and all $t>0$,
$$
\left\Vert (\mbox{I}+t \, L_M)^{-1}f\right\Vert_{L^2(F,d\mu_M)}+\left\Vert t \,
 L_M(\mbox{I}+t \, L_M)^{-1}f\right\Vert_{L^2(F,d\mu_M)}\leq $$
 $$ 8 \, e^{-C \,
 \frac{d}{\sqrt t}} \left\Vert f\right\Vert_{L^2(E,d\mu_M)}.
$$
\end{lem}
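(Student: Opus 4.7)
The plan is to implement a Gaffney-type argument adapted to the sub-Riemannian setting. Let $u := (I+tL_M)^{-1}f \in H^1(G,d\mu_M)$, characterized by
$$\int_G u\,\phi\,d\mu_M + t\sum_{i=1}^k\int_G X_iu\,X_i\phi\,d\mu_M = \int_G f\,\phi\,d\mu_M, \qquad \forall\,\phi\in H^1(G,d\mu_M).$$
Introduce $\rho(x) := \min(d(x,E),d)$, a bounded function that is $1$-Lipschitz for the Carnot-Carath\'eodory distance; after approximation by smooth horizontally $1$-Lipschitz functions, one may assume $\sum_{i=1}^k |X_i\rho|^2 \leq 1$ almost everywhere. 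Test the identity above against $\phi := e^{2\alpha\rho}u \in H^1(G,d\mu_M)$, where $\alpha := 1/(2\sqrt t)$.

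A direct computation using $X_i\phi = e^{2\alpha\rho}(X_iu + 2\alpha u\,X_i\rho)$ yields
$$\int_G e^{2\alpha\rho}u^2\,d\mu_M + t\sum_i\int_G e^{2\alpha\rho}|X_iu|^2\,d\mu_M + 2\alpha t\sum_i\int_G e^{2\alpha\rho}u\,X_iu\,X_i\rho\,d\mu_M = \int_G e^{2\alpha\rho}uf\,d\mu_M.$$
Cauchy-Schwarz bounds the cross term by $\frac{t}{2}\sum_i \int e^{2\alpha\rho}|X_iu|^2\,d\mu_M + 2\alpha^2 t \int e^{2\alpha\rho}u^2\,d\mu_M$, and since $2\alpha^2 t = \frac{1}{2}$ both terms are absorbed into the left-hand side, giving
$$\frac{1}{2}\int_G e^{2\alpha\rho}u^2\,d\mu_M \leq \int_G e^{2\alpha\rho}uf\,d\mu_M = \int_E uf\,d\mu_M \leq \|u\|_{L^2(E,d\mu_M)}\|f\|_{L^2(E,d\mu_M)} \leq \|f\|_{L^2(E,d\mu_M)}^2,$$
where the middle equality uses $\mbox{supp}\,f\subset E$ together with $\rho\equiv 0$ on $E$, and the last bound uses the $L^2$-contractivity $\|(I+tL_M)^{-1}\|_{L^2\to L^2}\leq 1$ afforded by the symmetry and positivity of $L_M$.

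Since $\rho \equiv d$ on $F$, restricting the left-hand side to $F$ yields $\|u\|_{L^2(F,d\mu_M)} \leq \sqrt 2 \, e^{-d/(2\sqrt t)}\|f\|_{L^2(E,d\mu_M)}$. For the second estimate, the identity $tL_M(I+tL_M)^{-1}f = f - u$ combined with $f \equiv 0$ on $F$ (as $E\cap F = \emptyset$) gives $\|tL_M(I+tL_M)^{-1}f\|_{L^2(F,d\mu_M)} = \|u\|_{L^2(F,d\mu_M)}$, so the sum of the two norms is bounded by $2\sqrt 2 \, e^{-d/(2\sqrt t)}\|f\|_{L^2(E,d\mu_M)} \leq 8\,e^{-Cd/\sqrt t}\|f\|_{L^2(E,d\mu_M)}$ with $C = 1/2$.

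The only non-routine point is justifying $\sum_i |X_i\rho|^2 \leq 1$ almost everywhere for the CC-Lipschitz truncated distance $\rho$; this rests on either a Rademacher-type theorem for horizontally Lipschitz functions on Lie groups, or a direct approximation of $\rho$ by smooth horizontally $1$-Lipschitz functions respecting the truncation. Once this is in hand, the argument is a standard weighted energy estimate and, notably, does not invoke the polynomial volume growth assumption or the Lyapunov hypothesis.
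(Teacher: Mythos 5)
Your proof is correct and is a legitimate Gaffney/Davies-type argument, but it differs from the paper's in the choice of test function and in how the forcing term is handled, so it is worth comparing the two. The paper tests the weak equation with $\phi=\eta^2 u_t$ where $\eta$ is a \emph{smooth} cutoff vanishing on $E$; since $f$ is supported in $E$ this kills the right-hand side outright, and one gets the general inequality $\int\eta^2 u_t^2\,d\mu_M\le t\int u_t^2\sum_i|X_i\eta|^2\,d\mu_M$. Only \emph{then} is $\eta$ specialized to $e^{\alpha\zeta}-1$, with $\zeta$ a smooth $\{0\to1\}$ cutoff from $E$ to $F$ satisfying $\max_i\Vert X_i\zeta\Vert_\infty\le C/d$, followed by a bootstrap $\Vert e^{\alpha\zeta}u_t\Vert\le\Vert(e^{\alpha\zeta}-1)u_t\Vert+\Vert u_t\Vert$. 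You instead test directly with $\phi=e^{2\alpha\rho}u$ where $\rho=\min(d(\cdot,E),d)$ is the truncated Carnot--Carath\'eodory distance; the right-hand side does not vanish, but because $\rho\equiv0$ on $E$ it collapses to $\int_E uf\,d\mu_M$, which you bound by $L^2$-contractivity of the resolvent. Your absorption with $\alpha=1/(2\sqrt t)$ leaves $\tfrac12\int e^{2\alpha\rho}u^2\,d\mu_M$ on the left, which is sharper and avoids the bootstrap step; it also handles all $t>0$ at once with no need for the paper's reduction to $t<d$. The trade-off is exactly what you flag: the paper only needs a smooth cutoff with horizontal gradient $O(1/d)$ (an elementary construction, e.g.\ by mollifying the distance), whereas you invoke the a.e.\ bound $\sum_i|X_i\rho|^2\le1$ for the CC-Lipschitz function $\rho$, which rests on a Rademacher-type theorem for H\"ormander vector fields (Garofalo--Nhieu, Monti--Serra Cassano, Franchi--Serapioni--Serra Cassano). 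That fact is known in this generality, so your proof is sound, but the paper's construction is more self-contained. Both approaches share the essential Davies exponential-weight mechanism and land on the same exponential off-diagonal decay; you correctly note, as the paper implicitly does, that neither polynomial volume growth nor the Lyapunov hypothesis is used in this lemma.
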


\begin{proof} We argue as in \cite{kato}, Lemma
1.1. From the fact that $L_M$ is self-adjoint on $L^2(G,d\mu_M)$ we have
\[ 
\| (L_M-\mu)^{-1} \|_{L^2(G,d\mu_M)} \le
\frac{1}{\mbox{dist}(\mu,\Sigma(L_M))}
\]
where $\Sigma(L_M)$ denotes the spectrum of $L_M$, and $\mu \not \in
\Sigma(L_M)$.  Then we deduce that $(\mbox{I}+t \, L_M)^{-1}$ is bounded with
norm less than $1$ for all $t >0$, and it is clearly enough to argue when
$0<t<d$.

In the following computations, we will make explicit the dependence of the measure $d\mu_M$ in terms of $M$ for sake of clarity.  Define $u_t=(\mbox{I}+t \, L_M)^{-1}f$, so that, for all function $v\in
H^{1}(G,d\mu_M)$,
\begin{eqnarray} \label{test} \int_{G} u_t(x) \, v(x) \, M(x) \,
dx+ \\ \nonumber
t \, \sum_{i=1}^k \int_{G} X_i u_t(x)\cdot X_i v(x) \, M(x) \,
dx=\\  \nonumber
\int_{G} f(x) \, v(x) \, M(x) \, dx.
\end{eqnarray}
Fix now a nonnegative function $\eta\in {\mathcal D}(G)$ vanishing
on $E$. Since $f$ is supported in $E$, applying (\ref{test}) with
$v=\eta^2 \, u_t$ (remember that $u_t\in H^1(G,d\mu_M)$)  yields
\[
\int_{G} \eta^2(x)\left\vert u_t(x)\right\vert^2
\, M(x) \, dx + t \sum_{i=1}^k\, \int_{G} X_i u_t(x)\cdot X_i
(\eta^2u_t) \, M(x) \, dx=0,
\]
which implies
\begin{multline*}
\int_{G} \eta^2(x)\left\vert u_t(x)\right\vert^2 \,
M(x) \, dx + t \, \int_{G} \eta^2(x)\sum_{i=1}^k \left\vert X_i u_t(x)
\right \vert^2 \, M(x) \, dx \\
= -2 \, t \sum_{i=1}^k \, \int_{G} \eta(x) \, u_t(x) \, X_i \eta(x)
\cdot X_i u_t(x) \, M(x) \, dx \\
\leq
\displaystyle t \, \int_{G} \left\vert u_t(x)\right\vert^2  \sum_{i=1}^k |X_i \eta(x) |^2 \, M(x)\, dx
+\\
t \, \int_{G} \eta^2(x)\sum_{i=1}^k  \left\vert X_i u_t(x)\right\vert^2
\, M(x) \, dx,
\end{multline*}
hence
\begin{equation} \label{hence} 
\int_{G} \eta^2(x)\left\vert
u_t(x)\right\vert^2 \, M(x) \, dx\leq t \, \int_{G} \left\vert
u_t(x)\right\vert^2 \sum_{i=1}^k \left\vert X_i \eta(x)\right\vert^2 \, M(x)
\, dx.
\end{equation}
Let $\zeta$ be a nonnegative smooth function on $G$ such that $\zeta=0$ on $E$, so that $\eta := e^{\alpha \, \zeta}-1 \geq 0$ and $\eta$ vanishes on $E$ for some $\alpha >0$ to be chosen. Choosing this particular $\eta$ in \eqref{hence} with
$\alpha>0$ gives
\[
\int_{G} \left\vert e^{\alpha \, \zeta(x)}-1\right\vert^2\left\vert
u_t(x)\right\vert^2 \, M(x) \, dx \leq \]
\[
\alpha^2 \, t \, \int_{G}
\left\vert u_t(x)\right\vert^2 \sum_{i=1}^k \left\vert X_i \zeta(x)\right\vert^2
\, e^{2 \, \alpha \, \zeta(x)} \, M(x) \, dx.
\]
Taking $\alpha= 1/(2 \, \sqrt{t} \, \max_i \left\Vert
X_i\zeta\right\Vert_{\infty})$, one obtains
\[
\int_{G} \left\vert e^{\alpha \, \zeta(x)}-1\right\vert^2\left\vert
u_t(x)\right\vert^2 \, M(x) \, dx\leq \frac 14 \, \int_{G} \left\vert
u_t(x)\right\vert^2 e^{2\, \alpha \, \zeta(x)} \, M(x) \, dx. 
\]

Using the fact that the norm of $(I+tL_M)^{-1}$ is bounded by $1$
uniformly in $t >0$, this gives
\[
\begin{array}{lll}
\displaystyle \left\Vert e^{\alpha\zeta} \, u_t\right\Vert_{L^2(G,d\mu_M)} 
& \leq & \displaystyle \left\Vert \left(e^{\alpha\zeta}-1\right) \,  
 u_t\right\Vert_{L^2(G,d\mu_M)} + \left\Vert u_t\right\Vert_{L^2(G,d\mu_M)} \\
& \leq & \displaystyle \frac 12 \left\Vert e^{\alpha\zeta}
 \, u_t\right\Vert_{L^2(G,d\mu_M)} + \left\Vert f\right\Vert_{L^2(G,d\mu_M)},
\end{array}
\]
therefore
\[
\begin{array}{lll}
\displaystyle \int_{G} 
\left\vert e^{\alpha \, \zeta(x)}\right\vert^2\left\vert
 u_t(x)\right\vert^2 \, M(x) \, dx 
& \leq & \displaystyle 4 \, \int_{G} \left\vert f(x)\right\vert^2
\, M(x) \,dx.
\end{array}
\]
We choose now $\zeta$ such that $\zeta =0$ on $E$ as before and
additionnally that $\zeta=1$ on $F$. It can  furthermore  be chosen with
$\max_{i=1,...k} \left\Vert X_i\zeta\right\Vert_{\infty} \leq C/d$, which yields the
desired conclusion for the $L^2$ norm of $(I+tL_M)^{-1}f$ with a factor $4$ in the right-hand side.  Since $t \,
L_M(\mbox{I}+t \, L_M)^{-1}f=f-(\mbox{I}+t \, L_M)^{-1}f$, the desired inequality with a factor $8$
readily follows. 
\end{proof}

\subsection{Control of $\left\Vert L_M^{\alpha/4}f\right\Vert_{L^2(G,d\mu_M)}$ and conclusion of the proof of Theorem \ref{mainth}}

This is now  the heart of the proof  to reach the conclusion of Theorem \ref{mainth}. The following first lemma is a standard quadratic estimate on powers of subelliptic operators. It is based on spectral theory.

\begin{lem} \label{quadratic} Let $\alpha\in (0,2)$. There exists $C>0$ such that, for all $f\in {\mathcal D}(L_M)$,
\begin{equation} \label{spectral} \left\Vert
L_M^{\alpha/4}f\right\Vert_{L^2(G, d\mu_M )}^2\leq C_3 \,
\int_0^{+\infty} t^{-1-\alpha/2} \left\Vert t \, L_M \,
(\mbox{I} + t \, L_M)^{-1} f\right\Vert_{L^2(G,d\mu_M)}^2 \, dt.
\end{equation}
\end{lem}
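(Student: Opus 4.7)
The plan is to reduce the inequality to an elementary one-variable integral identity via the spectral theorem. Since $L_M$ is self-adjoint and non-negative on the Hilbert space $L^2(G,d\mu_M)$, we can write $L_M=\int_{0}^{+\infty}\lambda\,dE_\lambda$, where $\{E_\lambda\}$ is the associated spectral resolution. For every Borel function $\Phi:[0,+\infty)\to\C$ and every $f\in L^2(G,d\mu_M)$ in the domain of $\Phi(L_M)$, we have
\[
\left\Vert \Phi(L_M)f\right\Vert_{L^2(G,d\mu_M)}^{2}=\int_0^{+\infty}\left\vert\Phi(\lambda)\right\vert^{2}\,d\mu_f(\lambda),
\]
where $d\mu_f(\lambda):=d\langle E_\lambda f,f\rangle$ is a positive finite Borel measure on $[0,+\infty)$.

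First, I would apply this identity to the two sides of \eqref{spectral} separately. For the left-hand side, with $\Phi(\lambda)=\lambda^{\alpha/4}$, one gets
\[
\left\Vert L_M^{\alpha/4}f\right\Vert_{L^2(G,d\mu_M)}^{2}=\int_0^{+\infty}\lambda^{\alpha/2}\,d\mu_f(\lambda).
\]
For the right-hand side, $\Phi(\lambda)=t\lambda/(1+t\lambda)$ gives, for each fixed $t>0$,
\[
\left\Vert tL_M(\mbox{I}+tL_M)^{-1}f\right\Vert_{L^2(G,d\mu_M)}^{2}=\int_0^{+\infty}\frac{t^{2}\lambda^{2}}{(1+t\lambda)^{2}}\,d\mu_f(\lambda).
\]

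Next, I would integrate in $t$ and swap the order of integration by Tonelli's theorem, which is legitimate since the integrand is nonnegative. This yields
\[
\int_0^{+\infty}t^{-1-\alpha/2}\left\Vert tL_M(\mbox{I}+tL_M)^{-1}f\right\Vert_{L^2(G,d\mu_M)}^{2}\,dt=\int_0^{+\infty}I(\lambda)\,d\mu_f(\lambda),
\]
where
\[
I(\lambda):=\int_0^{+\infty}\frac{t^{1-\alpha/2}\lambda^{2}}{(1+t\lambda)^{2}}\,dt.
\]
The change of variables $s=t\lambda$ (for $\lambda>0$; the contribution of $\{0\}$ is irrelevant since the integrand vanishes there) gives
\[
I(\lambda)=\lambda^{\alpha/2}\int_0^{+\infty}\frac{s^{1-\alpha/2}}{(1+s)^{2}}\,ds=:c_\alpha^{-1}\lambda^{\alpha/2},
\]
and the gamma-type integral $\int_0^{+\infty}s^{1-\alpha/2}(1+s)^{-2}ds$ converges precisely because $\alpha\in(0,2)$ ensures integrability both at $0$ (exponent $1-\alpha/2>-1$) and at infinity (exponent $-1-\alpha/2<-1$).

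Combining these computations,
\[
\int_0^{+\infty}t^{-1-\alpha/2}\left\Vert tL_M(\mbox{I}+tL_M)^{-1}f\right\Vert_{L^2(G,d\mu_M)}^{2}\,dt=c_\alpha^{-1}\left\Vert L_M^{\alpha/4}f\right\Vert_{L^2(G,d\mu_M)}^{2},
\]
which is in fact an identity, and thus proves \eqref{spectral} with $C_3=c_\alpha$. There is essentially no obstacle here beyond checking the convergence of $I(\lambda)$, which is exactly the source of the hypothesis $\alpha\in(0,2)$.
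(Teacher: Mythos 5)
Your proof is correct and is precisely the spectral-theory argument the paper alludes to (the paper gives no explicit proof, only the remark that the lemma ``is based on spectral theory''). You in fact establish the stronger result that the two sides are equal up to the explicit constant $c_\alpha=\left(\int_0^{+\infty}s^{1-\alpha/2}(1+s)^{-2}\,ds\right)^{-1}$, and you correctly identify the constraint $\alpha\in(0,2)$ as exactly what makes that one-variable integral converge.
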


We now come to the desired estimate. 
\begin{lem} \label{controllalpha} Let $\alpha\in (0,2)$ . There exists
$C >0$  such that, for all $f\in {\mathcal
 D}(G)$,
$$
\int_0^{\infty} t^{-1-\alpha/2} \left\Vert t \, L_M \, (\mbox{I} + t \, L_M)^{-1}
f\right\Vert_{L^2(G,d\mu_M)}^2 \, dt  \leq 
$$
$$C \, \iint_{G \times G}
\frac{\left\vert f(x)-f(y)\right\vert^2}{V\left(\left\vert y^{-1}x\right\vert\right)\left\vert
 y^{-1}x\right\vert^{\alpha}} \, M(x)\,    \, dx\, dy.
$$
\end{lem}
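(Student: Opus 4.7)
The plan is to exploit the off-diagonal bounds of Lemma \ref{off} together with an annular decomposition of $G$ at scale $\sqrt{t}$, using crucially that $T_t := tL_M(\mbox{I}+tL_M)^{-1}$ annihilates constants (since $(\mbox{I}+tL_M)^{-1}\mathbf{1} = \mathbf{1}$).

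First I fix $t>0$, choose a maximal $\sqrt{t}/2$-separated family $\{y_i\}$ in $G$, and set $B_i := B(y_i,\sqrt{t})$. Standard arguments based on doubling \eqref{homogiter} show that $\{B_i\}$ covers $G$ with bounded overlap. Let $c_i := \frac{1}{V(\sqrt{t})}\int_{B_i} f(z)\,dz$, so that $T_tf = T_t(f-c_i)$ on $B_i$. Decompose $f - c_i = \sum_{j\ge 0} h_{i,j}$ with $h_{i,0} := (f-c_i)\mathbf{1}_{B(y_i, 2\sqrt{t})}$ and $h_{i,j} := (f-c_i)\mathbf{1}_{A_{i,j}}$ for $j \ge 1$, where $A_{i,j} := B(y_i, 2^{j+1}\sqrt{t}) \setminus B(y_i, 2^j\sqrt{t})$. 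The uniform bound $\|T_t\|_{L^2\to L^2} \le 1$ handles $h_{i,0}$, while for $j \ge 1$ Lemma \ref{off} applied with $E = A_{i,j}$, $F = B_i$ (whence $\mathrm{dist}(E,F) \ge (2^j-1)\sqrt{t}$) produces a factor $\beta_j := 8\, e^{-c\, 2^j}$. The triangle and Cauchy--Schwarz inequalities, together with $\sum_j \beta_j < \infty$, then yield
\[
\|T_tf\|_{L^2(B_i,d\mu_M)}^2 \le C\sum_{j \ge 0} \beta_j \int_{A_{i,j}} |f(x) - c_i|^2 \, d\mu_M(x).
\]

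Second, Jensen's inequality applied to the Haar average $c_i$ gives $|f(x) - c_i|^2 \le \frac{1}{V(\sqrt{t})}\int_{B_i} |f(x) - f(z)|^2 \, dz$. I then sum over $i$ using the bounded overlap: for fixed $z$, the condition $z \in B_i$ forces $y_i \in B(z,\sqrt{t})$, a ball containing $O(1)$ points of the $\sqrt{t}/2$-separated family, so each pair $(x,z)$ is counted boundedly often. This produces
\[
\|T_tf\|^2_{L^2(d\mu_M)} \le \frac{C}{V(\sqrt{t})} \sum_j \beta_j \iint_{\Omega_j(t)} |f(x) - f(z)|^2 \, M(x)\, dz\, dx,
\]
where $\Omega_0(t) = \{(x,z) : |z^{-1}x| \le C\sqrt{t}\}$ and $\Omega_j(t) = \{(x,z) : |z^{-1}x| \sim 2^j\sqrt{t}\}$ for $j \ge 1$.

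Third, I multiply by $t^{-1-\alpha/2}$ and apply Fubini. For fixed $(x,z)$ with $\delta := |z^{-1}x|$, the set $\{t>0 : (x,z) \in \Omega_j(t)\}$ is an interval of length $\sim 4^{-j}\delta^2$ on which $t^{-1-\alpha/2} \sim 4^{j(1+\alpha/2)}\delta^{-2-\alpha}$ and $V(\sqrt{t}) \sim V(2^{-j}\delta)$; by \eqref{homogiter}, $V(\delta) \le C\, 2^{j\kappa}V(2^{-j}\delta)$. Hence the $t$-integral is at most $C\, 2^{j(\alpha+\kappa)}/(V(\delta)\,\delta^\alpha)$, and the super-exponential decay of $\beta_j$ ensures convergence of $\sum_j \beta_j\, 2^{j(\alpha+\kappa)}$, producing the fractional double integral of the statement. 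The main technical obstacle is the multiplicity bookkeeping in the second step, which is controlled precisely by the $\sqrt{t}/2$-separation of the centers $y_i$.
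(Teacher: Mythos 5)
Your proposal is correct and follows essentially the same route as the paper: a covering of $G$ at scale $\sqrt t$ by a bounded-overlap family, subtraction of local Haar means (using that $T_t$ annihilates constants), a dyadic annular decomposition combined with the Gaffney-type off-diagonal estimates of Lemma~\ref{off}, Jensen's inequality to convert $|f-c_i|^2$ into first differences, and then Fubini in $t$ controlled by the polynomial volume growth \eqref{homogiter}. The only cosmetic differences are that the paper picks disjoint balls $B(x_j^t,\sqrt t)$ whose doubles cover $G$ (your maximal $\sqrt t/2$-separated net is the same thing), and in the bookkeeping for the $j\ge 1$ annuli the paper ends up with a polynomial multiplicity factor $2^{2k\kappa}$ (Lemma~\ref{cardinal}) whereas your restriction of $z$ to the small ball $B_i$ gives an $O(1)$ overlap count — a slightly tighter estimate, but immaterial since the exponential factor $e^{-c2^k}$ from Lemma~\ref{off} dominates any polynomial growth in either case.
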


\begin{proof} Fix $t \in (0, +\infty)$. Following Lemma~\ref{quadratic}, we give an upper bound of
$$\left\Vert t \, L_M \, (\mbox{I} + t \, L_M)^{-1} f\right\Vert_{L^2(G,d\mu_M)}
^2$$
involving first order differences for $f$.  Using (\ref{homog}), one can pick up a countable family $x_j^t$, $j\in \N$, such that the balls
$B\left(x_j^{t},\sqrt{t}\right)$ are pairwise disjoint and 
\begin{equation} \label{union}
G=\bigcup_{j \in \N} B\left(x_j^{t},2\sqrt{t}\right).
\end{equation}
By Lemma \ref{cardinal} in Appendix A, there exists a constant $\tilde C>0$
such that for all $\theta>1$ and all $x\in G$, there are at most $\tilde
C\, \theta^{2\kappa}$ indexes $j$ such that $|x^{-1}x_j^t |\leq
\theta\sqrt{t}$ where $\kappa$ is given by (\ref{homogiter}).

\medskip

For fixed $j$, one has
$$
t \, L_M \, (\mbox{I} + t\, L_M)^{-1} f= t \, L_M \, (\mbox{I} + t\, L_M)^{-1} \,
g^{j,t}
$$
where, for all $x\in G$, 
$$
g^{j,t}(x):=f(x)-m^{j,t}
$$
and $m^{j,t}$ is defined by
$$
m^{j,t}:=\frac 1{V\left(2\sqrt{t}\right)}\int_{B\left(x_j^{t},2\sqrt{t}\right)}
f(y) dy$$
Note that, here, the mean value of $f$ is computed with respect to the
Haar measure on $G$. Since (\ref{union}) holds, one
clearly has
$$
\begin{array}{lll}
\displaystyle \left\Vert t \, L _M\, (\mbox{I} + t\, L_M)^{-1} f 
\right\Vert_{L^2(G,d\mu_M)}^2 
& \leq & \displaystyle \sum_{j \in \N} 
\left\Vert t \, L_M \, (\mbox{I} + t\, L_M)^{-1} f
\right\Vert_{L^2\left(B(x_j^t,2\sqrt{t}),d\mu_M\right)}^2\\
& = & \displaystyle  \sum_{j \in \N} \left\Vert t\, L_M \, (\mbox{I} + t\,
L_M)^{-1}  g^{j,t}
\right\Vert_{L^2\left(B\left(x_j^t,2\sqrt{t}\right),d\mu_M \right )}^2,
\end{array}
$$
and we are left with the task of estimating
$$
\left\Vert t \, L_M \, (\mbox{I} + t\, L_M)^{-1}
g^{j,t}\right\Vert_{L^2\left(B\left(x_j^{t},2\sqrt{t}\right),d\mu_M \right )} ^2. 
$$

To that purpose, set
$$
C_0^{j,t}=B\left(x_j^t, 4\sqrt{t}\right) \ \mbox{ and
} \ C_k^{j,t}=B\left(x_j^t,2^{k+2}\sqrt{t}\right)\setminus
B\left(x_j^t,2^{k+1}\sqrt{t}\right), \ \forall \, k \ge 1,
$$
and $g^{j,t}_k:=g^{j,t} \, {\bf 1}_{C_k^{j,t}}$, $k \ge 0$, where, for any
subset $A\subset G$, ${\bf 1}_A$ is the usual characteristic function of
$A$. Since $g^{j,t}=\sum_{k\geq 0} g^{j,t}_k$ one has
\begin{eqnarray} 
\left\Vert t \, L_M \, (\mbox{I} + t\, L_M)^{-1}  g^{j,t} 
\right\Vert_{L^2\left(B\left(x_j^{t},2\sqrt{t}\right),d\mu_M\right)}
\leq  
\\
\sum_{k\geq 0}  
\left\Vert t \, L_M \, (\mbox{I} + t\, L_M)^{-1}  g_k^{j,t}
\right\Vert_{L^2\left(B\left(x_j^{t},2\sqrt{t}\right),d\mu_M\right)} \nonumber
\end{eqnarray}
and, using Lemma \ref{off}, one obtains (for some constants $C,c>0$)
\begin{eqnarray} \label{expdecay}
\left\Vert t \, L_M \, (\mbox{I} + t\, L_M)^{-1}  g^{j,t} 
\right\Vert_{L^2\left(B\left(x_j^{t},2\sqrt{t}\right),d\mu_M\right)} 
\leq  
\\ \nonumber
C \, \left( \left\Vert
  g_0^{j,t}\right\Vert_{L^2(C_0^{j,t},d\mu_M)}
+\sum_{k\geq 1} e^{-c \, 2^{k}} 
\left\Vert g_k^{j,t}\right\Vert_{L^2(C_k^{j,t},d\mu_M)} \right).
\end{eqnarray}
By Cauchy-Schwarz's inequality, we deduce (for another constant $C'>0$)
\begin{eqnarray}\label{expdecaybis} 
\left\Vert t \, L_M \, (\mbox{I} + t\, L_M)^{-1}  g^{j,t}
\right\Vert_{L^2\left(B\left(x_j^{t},2\sqrt{t}\right),d\mu_M\right)} ^2  \leq \\ \nonumber
C' \,
\left( \left\Vert g_0^{j,t}\right\Vert_{L^2(C_0^{j,t},d\mu_M)}^2 +\sum_{k\geq
  1} e^{-c \, 2^{k}} \left\Vert g_k^{j,t}\right\Vert_{L^2(C_k^{j,t},d\mu_M)}^2
\right).
\end{eqnarray}

As a consequence,  we have 
\begin{equation} \label{expdecayter}
\begin{array}{lll}
\displaystyle \int_0^{\infty} t^{-1-\alpha/2} 
\left\Vert t \, L_M \, (\mbox{I} + t \, L_M)^{-1}
 f\right\Vert_{L^2(G,d\mu_M)}^2 \, dt \leq \\ 
\displaystyle C' \, \int_0^{\infty}
t^{-1-\alpha/2} \sum_{j\ge 0} 
\left\Vert g_0^{j,t}\right\Vert_{L^2(C_0^{j,t},d\mu_M)}^2 dt+ \\
\displaystyle C' \, \int_0^{\infty}  t^{-1-\alpha/2} 
\sum_{k\geq 1} e^{-c \, 2^{k}} \sum_{j \geq 0}
\left\Vert g_k^{j,t}\right\Vert_{L^2(C_k^{j,t},d\mu_M)}^2 dt.
\end{array}
\end{equation}

We claim that, and we pospone the proof into Appendix B:
\begin{lem} \label{estimg}
There exists $\bar C>0$ such that, for all $t>0$ and all $j \in \N$:
\begin{itemize}
\item[{\bf A.}] For the first term:
$$\displaystyle \left\Vert g_0^{j,t}\right\Vert_{L^2(C_0^{j,t},M)}^2\leq  
\frac{\bar C}{V(\sqrt{t})} \int_{B\left(x_j^t,4\sqrt{t}\right)}
\int_{B\left(x_j^t,4\sqrt{t}\right)} \left\vert f(x)-f(y)\right\vert^2 \,
d\mu_M(x) \, dy. $$
\item[{\bf B.}]
For all $k\geq 1$,
\[ \left\Vert g^{j,t}_k\right\Vert_{L^2(C_k^{j,t},d\mu_M)}^2 
\leq \]
\[
\frac{\bar C}{V(2^k\sqrt{t})} \int_{x\in B(x^t_j,2^{k+2}\sqrt{t})} 
\int_{y\in B(x^t_j,2^{k+2}\sqrt{t})} \left\vert f(x)-f(y)\right\vert^2 \, d\mu_M(x)\, dy.\]
\end{itemize}
\end{lem}
We finish the proof of the theorem. Using Assertion {\bf A} in Lemma \ref{estimg},
summing up on $j \ge 0$ and integrating over $(0,\infty)$, we get 
\begin{multline*}
\displaystyle \int_0^{\infty} t^{-1-\alpha/2} \sum_{j \ge 0} \left \Vert
g_0^{j,t}\right\Vert_{L^2\left(C_0^{j,t},d\mu_M\right)}^2 \, dt = \sum_{j
\ge 0} \int_0^{\infty} t^{-1-\alpha/2} \left\Vert g_0^{j,t}\right
\Vert_{L^2\left(C_0^{j,t},d\mu_M\right)}^2 \, dt \\
\displaystyle \le  \bar C \, \sum_{j \ge 0} \int_0^{\infty}
\frac{t^{-1-\frac{\alpha}2}}{V(\sqrt{t})} \left(\int_{B\left(x_j^t,4\sqrt{t}\right)}
\int_{B\left(x_j^t,4\sqrt{t}\right)} \left\vert
  f(x)-f(y)\right\vert^2 \, d\mu_M(x) \, dy\right) \, dt  \\
\displaystyle \le \bar C\, \sum_{j \ge 0}\iint_{(x,y)\in G\times G}
\left\vert f(x)-f(y)\right\vert^2 M(x)\times \\
\left(\int_{ t\geq
  \max\left\{\frac{\left\vert x^{-1}x_j^t\right\vert^2}{16}\,;\
    \frac{\left\vert y^{-1}x_j^t\right\vert^2}{16}\right\}}
\, \frac{t^{-1-\frac{\alpha}2}}{V(\sqrt{t})}dt\right) \, dx \, dy.
\end{multline*}

The Fubini theorem now shows
$$
\sum_{j \ge 0} \int_{ t\geq \max\left\{\frac{\left\vert
    x^{-1}x_j^t\right\vert^2}{16}\, ; \ \frac{\left\vert
    y^{-1}x_j^t\right\vert^2}{16}\right\}} \, \frac{t^{-1-\frac{\alpha}2}}{V(\sqrt{t})} dt
= $$

$$
\int_0^{\infty} \frac{t^{-1-\frac{\alpha}2}}{V(\sqrt{t})} \, \sum_{j \ge 0} {\bf
1}_{ \left(\max\left\{ \frac{\left\vert x^{-1}x_j^t\right\vert^2}{16}\, ; \
  \frac{\left\vert y^{-1}x_j^t\right\vert^2}{16} \right\},+\infty\right)}
(t) \, dt.
$$
Observe that, by Lemma \ref{cardinal}, there is a constant $N \in \N$ such
that, for all $t>0$, there are at most $N$ indexes $j$ such that
$\left\vert x^{-1}x_j^t\right\vert^2< 16\, t$  and $ \left\vert
y^{-1}x_j^t\right\vert^2<16 \, t$, and for these indexes $j$, one has
$ \left\vert x^{-1}y\right\vert<8\sqrt{t}$. It therefore follows that
$$
\sum_{j \ge 0} {\bf 1}_{\left(\max\left\{ \frac{\left\vert
       x^{-1}x_j^t\right\vert^2}{16} \, ; \ \frac{\left\vert
       y^{-1}x_j^t\right\vert^2}{16} \right\},+\infty\right)}(t)\leq N \, {\bf
1}_{ \left(\left\vert x^{-1}y\right\vert^2/64,+\infty\right)}(t),
$$
so that, by (\ref{homog}),
\begin{multline} \label{intg0}
\displaystyle \int_0^{\infty} t^{-1-\alpha/2} 
\sum_{j} \left\Vert
g_0^{j,t}\right\Vert_{L^2\left(C_0^{j,t},d\mu_M\right)}^2 \, dt \\
\leq \bar C \, N \, \iint_{G\times G} 
\left\vert f(x)-f(y)\right\vert^2M(x) 
\left( \int_{\left\vert x^{-1}y\right\vert^2/64}^{\infty} \,
\frac{t^{-1-\frac{\alpha}2}}{V(\sqrt{t})} \, dt\right) \, dx \, dy \\
\displaystyle  \leq  \bar C \, N \, \iint_{G\times G} 
\frac{\left\vert f(x)-f(y)\right\vert^2}
{V\left(\left\vert x^{-1}y\right\vert\right)\left\vert x^{-1}y\right\vert^{\alpha}} \, d\mu_M(x) \, dy.
\end{multline}
% \todo{Les modifications de constantes dans cette preuve viennent du fait
%   que les $Q(x,s)$ sont bien des CUBES et non des boules euclidiennes.}
Using now Assertion {\bf B} in Lemma \ref{estimg}, we obtain, for all $j
\ge 0$ and all $k\geq 1$,

$$
\begin{array}{l}
\displaystyle \int_0^{\infty} t^{-1-\alpha/2} 
\, \sum_{j \ge 0} \left\Vert g^{j,t}_k\right\Vert_2^2dt  \\
\displaystyle \leq 
\bar C \,  \, \sum_{j \ge 0} 
\int_0^{\infty} \frac{t^{-1-\frac{\alpha}2}}{V\left(2^k\sqrt{t}\right)} \, 
\left(\iint_{ B(x^t_j,2^{k+2}\sqrt{t}) \times  
   B(x^t_j,2^{k+2}\sqrt{t})} \left\vert f(x)-f(y)\right\vert^2 \, M(x) \, dx
 \, dy\right) \, dt\\
\displaystyle \leq \bar C \, \, 
\sum_{j \ge 0} \iint_{x,y\in G} 
\left\vert f(x)-f(y)\right\vert^2 \, M(x) \times \\ \, 
\displaystyle  \left(\int_0^{\infty} \frac{t^{-1-\frac{\alpha}2}}{V(2^k\sqrt{t})} \, 
 {\bf 1}_{\left( \max\left\{\frac{\left\vert x^{-1}x^t_j\right\vert^2}{4^{k+2}}, 
\frac{\left\vert y^{-1}x^t_j\right\vert^2}{4^{k+2}}\right\},+\infty\right)} 
 (t) \, dt\right) \, dx \, dy. 
\end{array}
$$
But, given $t>0$, $x,y\in G$, by Lemma \ref{cardinal} again, there exist
at most $\tilde C \, 2^{2k\kappa}$ indexes $j$ such that
$$
\left\vert x^{-1}x_j^t\right\vert\leq 2^{k+2}\sqrt{t} \ \mbox{ and } \ 
\left\vert y^{-1}x_j^t\right\vert\leq 2^{k+2}\sqrt{t}, 
$$
and for these indexes $j$, $\left\vert x^{-1}y\right\vert\leq
2^{k+3}\sqrt{t}$. As a consequence,
\begin{equation} \label{intgk}
\begin{array}{lll}
\displaystyle 
\int_0^{\infty} \frac{t^{-1-\frac{\alpha}2}}{V(2^k\sqrt{t})} \, 
\sum_{j \ge 0} {\bf 1}_{ \left(\max\left\{\frac{\left\vert
         x^{-1}x^t_j\right\vert^2}{4^{k+2}}, 
\frac{\left\vert x^{-1}x^t_j\right\vert^2}{4^{k+2}}\right\},+\infty\right)}(t)
\, dt  \leq \\
\displaystyle \tilde C \, 2^{2k\kappa} \, 
\int_{ t\geq \frac{\left\vert x^{-1}y\right\vert^2}{4^{k+3}}}  \,
\frac{t^{-1-\frac{\alpha}2}}{V(2^k\sqrt{t})} \, dt \leq \\
\displaystyle \tilde C' \frac{2^{k(2\kappa+\alpha)}} 
{V\left(\left\vert x^{-1}y\right\vert\right)\left\vert x^{-1}y\right\vert^{\alpha}},
\end{array}
\end{equation}
for some other constant $\tilde C' >0$, and therefore
$$
\displaystyle \int_0^{\infty} \frac{t^{-1-\alpha/2}}{V\left(2^k\sqrt{t}\right)} 
\sum_{j} \left\Vert
g_k ^{j,t}\right\Vert_{L^2\left(C_0^{j,t},d\mu_M\right)}^2 \, dt \leq 
$$

$$
\bar C \, \tilde C' \, 2^{k(2\kappa+\alpha)} \, \iint_{G\times G} 
\frac{\left\vert f(x)-f(y)\right\vert^2}
{V\left(\left\vert x^{-1}y\right\vert\right)\left\vert x^{-1}y\right\vert^{\alpha}} \, M(x) \, dx \, dy.
$$

We can now conclude the proof of Lemma \ref{controllalpha}, using
Lemma \ref{quadratic}, (\ref{expdecay}), (\ref{intg0}) and (\ref{intgk}). 
We have proved, by reconsidering \eqref{expdecayter}: 
\begin{multline}
\displaystyle 
\int_0^{\infty} t^{-1-\alpha/2} \left\Vert t \, L_M \, (\mbox{I} + t \, L_M)^{-1}
f\right\Vert_{L^2(G,d\mu_M)}^2 \, dt
\leq 
\\
\displaystyle C' \, \bar C \, N \, \iint_{G\times G} 
\frac{\left\vert f(x)-f(y)\right\vert^2}
{V\left(\left\vert x^{-1}y\right\vert\right) \left\vert x-y\right\vert^{\alpha}} \, M(x) \, dx \, dy \\
+ \displaystyle \sum_{k \ge 1} 
C' \, \bar C \, \tilde C' \, 2^{k(2\kappa+\alpha)} \, e^{-c \, 2^k} \,  
\iint_{G\times G} 
\frac{\left\vert f(x)-f(y)\right\vert^2}
{V\left(\left\vert x^{-1}y\right\vert\right)\left\vert x^{-1}y\right\vert^{\alpha}} \, M(x) \, dx \, dy
\end{multline}
and we deduce that 
$$
\displaystyle 
\int_0^{\infty} t^{-1-\alpha/2} \left\Vert t \, L_M \, (\mbox{I} + t \, L_M)^{-1}
f\right\Vert_{L^2(G,d\mu_M)}^2 \, dt
%\left\Vert L^{\alpha/4}f\right\Vert_{L^2(\R^n,M)}^2  
\le $$
$$
C \, \iint_{G\times G} \frac{\left\vert f(x)-f(y)\right\vert^2}
{V\left(\left\vert x^{-1}y\right\vert\right)\left\vert x^{-1}y\right\vert^{\alpha}} \, d\mu_M(x) \, dy
$$
for some constant $C$ as claimed in the statement.
\end{proof}

\begin{rem}
In the Euclidean context, Strichartz proved in  (\cite{stri}) that, when $0<\alpha<2$, for all $p\in
(1,+\infty)$,
\begin{equation} \label{compars} \left\Vert
 (-\Delta)^{\alpha/4}f\right\Vert_{L^p(\R^n)} \leq C_{\alpha,p}
\left\Vert S_{\alpha}f\right\Vert_{L^p(\R^n)}
\end{equation}
where
$$
S_{\alpha}f(x)=\left(\int_0^{+\infty} \left(\int_B \left\vert
   f(x+ry)-f(x)\right\vert dy\right)^2
\frac{dr}{r^{1+\alpha}}\right)^{\frac 12},
$$
and also (\cite{stein}) 
\begin{equation} \label{compard} \left\Vert
 (-\Delta)^{\alpha/4}f\right\Vert_{L^p(\R^n)} \leq C_{\alpha,p}
\left\Vert D_{\alpha}f\right\Vert_{L^p(\R^n)}
\end{equation}
where
$$
D_{\alpha}f(x)=\left(\int_{\R^n} \frac{\left\vert
   f(x+y)-f(x)\right\vert^2}{\left\vert
   y\right\vert^{n+\alpha}}dy\right)^{\frac 12}.
$$
In \cite{crt}, these inequalities were extended to the setting of a
unimodular Lie group endowed with a sub-laplacian $\Delta$, relying on
semigroups techniques and Littlewood-Paley-Stein functionals. In
particular, in \cite{crt}, the authors use {\it pointwise} estimates of the kernel
of the semigroup generated by $\Delta$. In the present paper, we deal with the operator $L_M$ for which these pointwise estimates are not available, but it turns out that $L^2$ off-diagonal estimates are enough for our purpose. Note that we do not obtain $L^p$ inequalities here. 
\end{rem}

\section{Appendix A: Technical lemma}

We prove the following lemma. 
\begin{lem} \label{cardinal} 
Let $G$ and the $x_j^t$ be as in the proof of Lemma \ref{controllalpha} . Then there exists a constant $\tilde C>0$ with the
following property: for all $\theta>1$ and all $x\in G$, there are at
most $\tilde C\, \theta^{2\kappa}$ indexes $j$ such that $\left\vert
x^{-1}x_j^t\right\vert\leq \theta\sqrt{t}$.
\end{lem}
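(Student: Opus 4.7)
The plan is a standard packing argument that combines the pairwise disjointness of the balls $B(x_j^t, \sqrt{t})$ with the polynomial growth bound (\ref{homogiter}). I would fix $\theta > 1$, $x \in G$, $t > 0$, and set
$$J_x := \left\{ j \in \N : \left\vert x^{-1} x_j^t \right\vert \leq \theta \sqrt{t} \right\},$$
so that the goal is to estimate $|J_x|$.

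First, I would observe that for any $j \in J_x$ and any $z \in B(x_j^t, \sqrt{t})$, the triangle inequality for the left-invariant distance gives
$$d(x,z) \leq d(x, x_j^t) + d(x_j^t, z) \leq \theta \sqrt{t} + \sqrt{t} \leq (\theta + 1)\sqrt{t},$$
so $B(x_j^t, \sqrt{t}) \subset B\!\left(x, (\theta+1)\sqrt{t}\right)$. Next, I would use left-invariance of both $d$ and $dx$ (so that $V(B(y,r)) = V(r)$ for every $y \in G$) together with the pairwise disjointness of the balls $B(x_j^t, \sqrt{t})$ to sum the Haar volumes and obtain
$$|J_x| \cdot V(\sqrt{t}) = \sum_{j \in J_x} V\!\left( B(x_j^t, \sqrt{t}) \right) \leq V\!\left((\theta+1)\sqrt{t}\right).$$

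Finally, applying (\ref{homogiter}) with scaling factor $\theta + 1 \leq 2\theta$ yields
$$V\!\left((\theta+1)\sqrt{t}\right) \leq C(\theta+1)^{\kappa} V(\sqrt{t}) \leq C\, 2^{\kappa}\, \theta^{\kappa} V(\sqrt{t}),$$
and dividing through gives $|J_x| \leq C\, 2^{\kappa}\, \theta^{\kappa}$. Since $\theta > 1$ implies $\theta^{\kappa} \leq \theta^{2\kappa}$, one gets the claimed bound $|J_x| \leq \tilde C \, \theta^{2\kappa}$ with $\tilde C := C\, 2^{\kappa}$.

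There is essentially no obstacle here: the argument is the standard ``volume counting'' trick for maximal packings, and the only ingredients needed beyond elementary geometry are the left-invariance of the Haar measure (to ensure all balls of radius $\sqrt{t}$ have identical volume $V(\sqrt{t})$) and the polynomial growth estimate (\ref{homogiter}). The fact that the stated exponent $2\kappa$ is weaker than the $\kappa$ that the proof naturally produces is harmless and only provides extra slack.
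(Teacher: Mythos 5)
Your argument is correct, and it is slightly cleaner than the paper's. You exploit the left-invariance of the Haar measure (and of the Carnot--Carath\'eodory distance) so that every ball $B(x_j^t,\sqrt t)$ has exactly the same volume $V(\sqrt t)$; combining this with disjointness and the single inclusion $B(x_j^t,\sqrt t)\subset B\bigl(x,(1+\theta)\sqrt t\bigr)$, you need only \emph{one} application of the doubling bound (\ref{homogiter}), which naturally gives the sharper exponent $\theta^\kappa$, and you then observe $\theta^\kappa\le\theta^{2\kappa}$ to match the stated bound. The paper instead writes the argument in the form used for general doubling metric measure spaces: it does not invoke $V(y,r)=V(r)$, but rather uses \emph{two} ball inclusions, $B(x,\sqrt t)\subset B(x_j^t,(1+\theta)\sqrt t)$ and $B(x_j^t,\sqrt t)\subset B(x,(1+\theta)\sqrt t)$, each followed by (\ref{homogiter}); the two applications of the doubling property are precisely what produce the exponent $2\kappa$ in the statement. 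Your version buys a sharper constant and is simpler in the unimodular Lie group setting; the paper's version buys greater generality (it would survive verbatim if the measure were merely doubling rather than translation invariant). Both are valid, and the difference is only a stylistic choice of which structure to lean on.
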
 

\noindent {\bf Proof of Lemma~\ref{cardinal}.} The argument is very simple
(see \cite{kanai}) and we give it for the sake of completeness. Let $x\in G$
and denote
$$I(x):= \left\{j \in \N \, ;\ \left\vert x^{-1}x_j^t\right\vert\leq \theta\sqrt{t}\right\}.$$

Since, for all $j\in I(x)$
$$B\left(x_j^t,\sqrt{t}\right)\subset B\left(x,\left(1+\theta\right)\sqrt{t}\right),$$ and
$$ B\left(x,\sqrt{t}\right)\subset B\left(x_j^t,(1+\theta)\sqrt{t}\right),$$
one has by (\ref{homogiter}) and the fact that the balls $B\left(x_j^t,\sqrt{t}\right)$ are pairwise disjoint,
$$
\begin{array}{lll}
\displaystyle \left\vert I(x)\right\vert V\left(x,\sqrt{t}\right) & \leq & \displaystyle \sum_{j\in I(x)} V\left(x_j^t,\left(1+\theta\right)\sqrt{t}\right)\\
&  \leq & \displaystyle C(1+\theta)^{\kappa} \sum_{j\in I(x)} V\left(x_j^t,\sqrt{t}\right)\\
& \leq & \displaystyle C(1+\theta)^{\kappa} V\left(x,\left(1+\theta\right)\sqrt{t}\right)\\
& \leq & \displaystyle C(1+\theta)^{2\kappa} V\left(x,\sqrt{t}\right)
\end{array}
$$
and we get the desired conclusion. \hfill\fin\par

\section{Appendix B: Estimates for $g_j^t$}

We prove Lemma \ref{estimg}. For all $x\in G$,
$$
\begin{array}{lll}
\displaystyle g_0^{j,t}(x) & = & \displaystyle f(x) 
-\frac 1{V(2\sqrt{t})} 
\int_{B\left(x_j^t,2\sqrt{t}\right)} f(y) \, dy\\
& = & \displaystyle \frac 1{V(2\sqrt t)} 
\int_{B\left(x_j^t,2\sqrt{t}\right)} (f(x)-f(y)) \, dy.
\end{array}
$$
By Cauchy-Schwarz inequality and (\ref{homog}), it follows that
$$
\left\vert g_0^{j,t}(x)\right\vert^2\leq \frac C{V(\sqrt{t})}   \int_{ B\left(x_j^t,4\sqrt{t}\right)} \left\vert f(x)-f(y)\right\vert^2dy.
$$
Therefore,
$$
\left\Vert g_0^{j,t}\right\Vert_{L^2(C_0^{j,t},M)}^2\leq \frac
C{V(\sqrt{t})} \int_{ B\left(x_j^t,4\sqrt{t}\right)}
\int_{ B\left(x_j^t,4\sqrt{t}\right)} \left\vert
f(x)-f(y)\right\vert^2 \, d\mu_M(x) \, dy,
$$
which shows Assertion {\bf A}. We argue similarly for Assertion {\bf B} and
obtain
$$
%\begin{array}{lll}
\displaystyle \left\Vert g^{j,t}_k\right\Vert_{L^2 (C_k ^{j,t},M)}^2 
%& \leq & \displaystyle \frac C{t^{n/2}} \int_{x\in Q(x^t_j,2^{k+1}\sqrt{t})} \int_{y\in Q(x^t_j,2\sqrt{t})} \left\vert f(x)-f(y)\right\vert^2M(x)dxdy\\
\leq \displaystyle \frac C{V(2^k\sqrt{t})} \int_{ x\in B(x^t_j,2^{k+2}\sqrt{t})}
\int_{ y\in B(x^t_j,2^{k+2}\sqrt{t})} \left\vert
f(x)-f(y)\right\vert^2 \, d\mu_M(x) \, dy,
%\end{array}
$$
which ends the proof.

\bibliographystyle{alpha}                  % *.bst files
   \bibliography{Biblio-puiss-frac-Lie}      
   \medskip  
{\em Emmanuel Russ}--
Universit\'e Paul C\'ezanne, LATP,\\
Facult\'e des Sciences et Techniques, Case cour A\\
Avenue Escadrille Normandie-Niemen, F-13397 Marseille, Cedex 20, France et  \\
CNRS, LATP, CMI, 39 rue F. Joliot-Curie, F-13453 Marseille Cedex 13, France 

\medskip 

{\em Yannick Sire}--
Universit\'e Paul C\'ezanne, LATP,\\
Facult\'e des Sciences et Techniques, Case cour A\\
Avenue Escadrille Normandie-Niemen, F-13397 Marseille, Cedex 20, France et  \\
CNRS, LATP, CMI, 39 rue F. Joliot-Curie, F-13453 Marseille Cedex 13, France.

\end{document}